\newtheorem{theorem}{Theorem}[section]
\newtheorem{lemma}[theorem]{Lemma}
\newtheorem{corollary}[theorem]{Corollary}
\theoremstyle{definition}
\theoremstyle{remark}
\newtheorem{remark}[theorem]{Remark}
\newcommand{\bbP}{{\mathbb P}}
\newcommand{\cf}{{\rm cf}}
\newcommand{\pcf}{{\rm pcf}}
\newcommand{\ZFC}{{\rm ZFC}}
\newcommand{\tcf}{{\rm tcf}}
\newcommand{\Pcut}{{\rm Pcut}}
\newcommand{\Add}{{\rm Add}}
\def\mathunderaccent#1#2 {\let\theaccent#1\skewfactor#2
\mathpalette\putaccentunder}
\def\putaccentunder#1#2{\oalign{$#1#2$\crcr\hidewidth
\vbox to.2ex{\hbox{$#1\skew\skewfactor\theaccent{}$}\vss}\hidewidth}}
\begin{document}

\title {On the number of cofinalities of cuts in ultraproducts of linear orders}

\author[M.  Golshani]{Mohammad Golshani}

\address{Mohammad Golshani, School of Mathematics, Institute for Research in Fundamental Sciences (IPM), P.O.\ Box:
	19395--5746, Tehran, Iran.}

\email{golshani.m@gmail.com}

\thanks{ The author's research has been supported by a grant from IPM (No. 1403030417). The author thanks Saharon Shelah for some useful discussions.}

\subjclass[2010]{Primary: 03E05, 03E10  }

\keywords {Possible cuts, pcf, cardinal arithmetic.}


\begin{abstract}
Suppose $\kappa$ is a regular cardinal and $\bar a=\langle \mu_i: i<\kappa \rangle$ is a non-decreasing sequence of regular cardinals.
We study the set of possible cofinalities of cuts $\Pcut(\bar a)=\{(\lambda_1, \lambda_2):$ for some ultrafilter $D$ on $\kappa$, $(\lambda_1, \lambda_2)$ is the cofinality of a cut of $\prod\limits_{i<\kappa} \mu_i / D      \}$.
\end{abstract}

\maketitle
\numberwithin{equation}{section}
\section{introduction}
In late 1980th, Shelah introduced the theory of possible cofinalities, abbreviated $\pcf$ and used it to prove many surprising and unexpected results in $\ZFC$, showing that $\ZFC$ is much more powerful than what was expected before by many set theorists.
Given a regular cardinal $\kappa$ and an increasing sequence of regular cardinals $\bar a=\langle \mu_i: i<\kappa \rangle$,
$\pcf(\bar a)$ is defined as
\[
\pcf(\bar a)=\{ \cf(\prod\limits_{i<\kappa}/D): D \text{~is an ultrafiler on~}\kappa       \}.
\]
One of the main results in $\pcf$ theory is that under suitable assumptions on the sequence $\bar a$, $|\pcf(\bar a)|$ is bounded and indeed $|\pcf(\bar a| < \min\{(2^\kappa)^+, \kappa^{+4}     \}$ (see \cite{shelah}).

In this paper, we introduce and study a related concept, where instead of taking cofinalities of ultraproducts, we take cuts of such ultraproducts and their cofinalities. Let $\kappa$ be a regular cardinal and let $\bar a$ be a non-decreasing sequence of regular cardinals. The set of possible cuts of $\bar a$ is defined as
\begin{center}
$\Pcut(\bar a)=\{(\lambda_1, \lambda_2):$ $\lambda_1, \lambda_2 \geq \aleph_0$ and for some ultrafilter $D$ on $\kappa$, $(\lambda_1, \lambda_2)$ is the cofinality of a cut of $\prod\limits_{i<\kappa} \mu_i / D      \}$.
\end{center}
We find a bound on $\Pcut(\bar a)$ by showing that $|\Pcut(\bar a)| \leq 2^\kappa$.
We also discuss the optimality of this result.

\section{Bounds on $\Pcut$}
Let $\kappa$ be a regular cardinal and let $\bar a=\langle \mu_i: i<\kappa \rangle$ be an increasing sequence of regular cardinals.
In this section we discuss the cardinality of the set
$\Pcut(\bar a)$. We first prove the following:
\begin{theorem}
\label{thm1}
Suppose $\kappa$ and $\bar a$ are as above. Then $|\Pcut(\bar a)| \leq 2^\kappa$. Indeed if
$2^\kappa=\aleph_\alpha,$ then $|\Pcut(\bar a)| \leq |\alpha+1|^2$.
\end{theorem}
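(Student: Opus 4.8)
The plan is to reduce the whole statement to a single structural lemma: for every ultrafilter $D$ on $\kappa$ and every proper cut $(A,B)$ of $\prod_{i<\kappa}\mu_i/D$ — that is, a gap with $A$ having no last element and $B$ no first element — both cofinalities are at most $2^\kappa$, i.e. $\cf(A)\le 2^\kappa$ and $\cf^*(B)\le 2^\kappa$, where $\cf^*(B)$ denotes the coinitiality of $B$. Granting this, the cardinality bound is pure bookkeeping. Write $2^\kappa=\aleph_\alpha$. A cofinality appearing in a cut is an infinite regular cardinal $\le\aleph_\alpha$, and there are at most $|\alpha+1|$ such cardinals, since $\aleph_\beta\mapsto\beta$ injects them into the set of ordinals $\le\alpha$. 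Hence each coordinate of a pair in $\Pcut(\bar a)$ ranges over a set of size $\le|\alpha+1|$, so $|\Pcut(\bar a)|\le|\alpha+1|^2$. Finally $|\alpha+1|\le\aleph_\alpha=2^\kappa$ because $\alpha\le\aleph_\alpha$, and as $2^\kappa$ is infinite we get $|\alpha+1|^2\le(2^\kappa)^2=2^\kappa$, which yields both displayed inequalities at once.

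The heart of the matter is the bound on $\cf(A)$. I would set $\lambda_1=\cf(A)$ and fix a $<_D$-increasing sequence $\langle f_\xi/D:\xi<\lambda_1\rangle$ cofinal in $A$, together with some $g/D\in B$, so that $f_\xi<_D g$ for all $\xi$; thus the $f_\xi$ form a bounded $<_D$-increasing sequence in $\prod_{i<\kappa}\mu_i$. Suppose toward a contradiction that $\lambda_1>2^\kappa$. Here I would invoke the standard exact-upper-bound existence theorem of pcf theory (as in \cite{shelah}): any $<_D$-increasing sequence of regular length $>2^\kappa$ has a $<_D$-exact upper bound $f$, and since our sequence is bounded by $g$ we may take $f\le_D g$, so that $f/D$ is a genuine element of the ultraproduct. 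Now I would locate $f/D$ relative to the cut. It cannot lie in $A$: since $f$ bounds every $f_\xi$ strictly while the $f_\xi$ are cofinal in $A$, membership $f/D\in A$ would produce some $\xi$ with $f/D<_D f_\xi<_D f$. Hence $f/D\in B$. But if some $h/D\in B$ had $h<_D f$, then by exactness some $f_\xi$ satisfies $h<_D f_\xi$, forcing $h/D\in A$, a contradiction. Therefore $f/D=\min B$, contradicting that $B$ has no first element, and so $\lambda_1\le 2^\kappa$.

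For the coinitiality $\cf^*(B)$ I would run the order-dual of this argument: fixing a $<_D$-decreasing sequence $\langle g_\eta/D:\eta<\lambda_2\rangle$ coinitial in $B$ and bounded below by any element of $A$, and assuming $\lambda_2>2^\kappa$, the dual exact-bound statement produces a $<_D$-exact lower bound $f$, and the same case analysis forces $f/D=\max A$, contradicting that $A$ has no last element. The one genuinely nontrivial ingredient — the step I expect to be the main obstacle — is the existence of these exact bounds for sequences of length exceeding $2^\kappa$: this is exactly where the value $2^\kappa$ enters, through the counting of functions modulo $D$ in the rank/tree argument underlying the theorem, and the decreasing case must be argued on its own, since the reverse of an ordinal is not well-ordered and hence does not itself present as an ultraproduct of ordinals. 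Once both exact-bound statements are secured, the location-of-the-bound arguments and the final counting are routine.
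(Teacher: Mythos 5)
Your reduction to ``both cofinalities of any cut are $\le 2^\kappa$'' and the final counting (infinite regular cardinals $\le\aleph_\alpha$ inject into $\alpha+1$, hence $|\Pcut(\bar a)|\le|\alpha+1|^2\le 2^\kappa$) is exactly the paper's skeleton. For the lower cofinality $\lambda_1$ your route is genuinely different from the paper's and is legitimate: the paper does not cite the trichotomy theorem but builds a $<_D$-least upper bound by hand, intersecting $\kappa$ equivalence relations on $\lambda_1$ (each with at most $2^\kappa$ classes, recording how $f_\alpha$ sits relative to the values $g_\xi(i)$) and taking the pointwise supremum over an unbounded equivalence class; you instead invoke the standard pcf fact that a $<_D$-increasing sequence of regular length $>2^\kappa$ over an ultrafilter has an exact upper bound and then locate that bound inside the cut. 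Both work; yours is shorter but imports a black box, the paper's is self-contained.

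The genuine gap is on the upper side, $\lambda_2={\rm cf}^*(B)$. There is no ``dual exact lower bound'' theorem to cite: the proof of the exact-upper-bound/trichotomy theorem uses the well-foundedness of the ordinals in an essential way, and, as you yourself observe, the reversed order does not present as an ultraproduct of ordinals, so the statement does not follow by symmetry --- and you never supply an argument for it. You correctly identify this as the main obstacle and then leave it open, so the bound on $\lambda_2$ is simply not proved. Moreover the exact-bound framing is the wrong tool here, because something much stronger and much easier is true: $\prod_{i<\kappa}\mu_i/D$ contains no $<_D$-decreasing sequence of length $(2^\kappa)^+$ at all. The paper proves this directly: given such a sequence $\langle g_\xi:\xi<(2^\kappa)^+\rangle$, color each pair $\xi<\zeta$ by the least $i<\kappa$ with $g_\zeta(i)<g_\xi(i)$; the Erd\H{o}s--Rado relation $(2^\kappa)^+\to(\kappa^+)^2_\kappa$ yields a set of size $\kappa^+$ and a single coordinate $i_*$ along which the values $g_\xi(i_*)$ are strictly decreasing, giving an infinite descending sequence of ordinals. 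So $\lambda_2\le 2^\kappa$ holds because no longer coinitial decreasing sequence can exist in the first place; the ``exact lower bound'' machinery is both unavailable and unnecessary, and you should replace that half of your argument with a partition (or similar counting) argument of this kind.
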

\begin{proof}
We prove the theorem by showing that if $(\lambda_1, \lambda_2) \in \Pcut(\bar a)$, then $\lambda_1, \lambda_2 \leq 2^\kappa.$
Thus fix $(\lambda_1, \lambda_2)$ as above and let $D$ be an ultrafilter on $\kappa$ such that $(\lambda_1, \lambda_2)$
is the cofinality of a cut of $\prod\limits_{i<\kappa} \mu_i / D$.

$(*):$ $\lambda_2 \leq 2^\kappa.$
\begin{proof}
Suppose towards a contradiction that $\lambda_2 > 2^\kappa$ and let $\bar g=(g_\xi: \xi < (2^\kappa)^+              \rangle$ be a $<_D$-decreasing sequence in $\prod\limits_{i<\kappa} \mu_i$. Define the function $F: [(2^\kappa)^+]^2 \to \kappa$
by
\[
F(\xi, \zeta)=\min\{i<\kappa: g_\zeta(i) < g_\xi(i)   \}.
\]
The function $F$ is well-defined. By the Erdos-Rado partition theorem, we can find some set $X$
of cardinality $\kappa^+$ and some $i_*<\kappa$ such that for all $\xi < \zeta$ in $X$, we have $g_\zeta(i_*) < g_\xi(i_*)$.
Thus we get an infinite strictly decreasing sequence of ordinals which is absurd. Thus $\lambda_2 \leq 2^\kappa$, as requested.
\end{proof}
$(**):$ $\lambda_1 \leq 2^\kappa.$
\begin{proof}
Suppose towards a contradiction that $\lambda_1 > 2^\kappa$. Let $D$ and $(\bar f, \bar g)$ witness that $(\lambda_1, \lambda_2) \in \Pcut(\bar a)$, where $\bar f = \langle f_\alpha: \alpha < \lambda_1 \rangle$
and $\bar g= \langle g_\xi: \xi<\lambda_2 \rangle.$
For $i<\kappa$ set $U_i=\{g_\xi(i): \xi < \lambda_2       \}$. Note that $|U_i|\leq \lambda_2 \leq 2^\kappa.$ Define the relation
$E_i$ on $\lambda_1$ by $\alpha E_i \beta$ iff the following hold:
\begin{enumerate}
\item for all $\xi < \lambda_2$ and all $\delta \in U_i$,
\[
f_\alpha(\xi)< \delta \iff f_\beta(\xi) < \delta,
\]
\item for all $\xi < \lambda_2$ and all $\delta \in U_i$,
\[
f_\alpha(\xi)= \delta \iff f_\beta(\xi) = \delta.
\]
\end{enumerate}
The relation $E_i$ is clearly an equivalence relation on $\lambda_1$ and it has at most
$|U_i| \leq 2^\kappa$ many equivalence classes.
Set $E=\bigcap\limits_{i<\kappa}E_i.$ Then $E$ is also an equivalence relation and it has at most
$(2^\kappa)^\kappa=2^\kappa$ many equivalence classes.

As $\lambda_1 > 2^\kappa,$ there exists some equivalence class $Y$ which is unbounded in $\lambda_1.$
Define $h \in \prod\limits_{i<\kappa}\mu_i$ by
\[
h(i)=\sup\{f_\alpha(i): \alpha \in Y     \}
\]
Then $h$ is easily seen to be the $<_D$-least upper bound of $\bar f$
and that for all $\xi < \lambda_2, h <_D g_\xi.$ Thus
\[
\forall \alpha < \lambda_1~\forall \xi < \lambda_2 \left( f_\alpha <_D h <_D g_\xi \right)
\]
which contradicts the choice of $(\bar f, \bar g)$.
\end{proof}
The theorem  follows from $(*)$ and $(**)$.
\end{proof}
\begin{corollary}
\label{co1}
Suppose $2^\kappa$ is not a fixed point of the $\aleph$-function. Then  $|\Pcut(\bar a)| < 2^\kappa$.
\end{corollary}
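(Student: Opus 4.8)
The plan is to deduce this directly from the sharper estimate in Theorem~\ref{thm1}, namely that if $2^\kappa=\aleph_\alpha$ then $|\Pcut(\bar a)|\leq |\alpha+1|^2$. So first I would write $2^\kappa=\aleph_\alpha$ and recall this bound; everything then reduces to showing that the failure of $2^\kappa$ to be a fixed point of the $\aleph$-function forces $|\alpha+1|^2<2^\kappa$.

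Next I would unpack the fixed-point hypothesis. Saying that $2^\kappa$ is a fixed point of the $\aleph$-function means $\aleph_{2^\kappa}=2^\kappa$, equivalently (since $2^\kappa=\aleph_\alpha$) that $\aleph_\alpha=\alpha$, i.e.\ $\alpha=2^\kappa$. Because one always has $\alpha\leq\aleph_\alpha=2^\kappa$, the negation of the fixed-point condition is exactly $\alpha<2^\kappa$ as ordinals. Since $2^\kappa$ is an infinite cardinal (an initial ordinal) and $\alpha$ is an ordinal strictly below it, any such $\alpha$ satisfies $|\alpha|<2^\kappa$.

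It then remains to bound $|\alpha+1|^2$. If $\alpha$ is finite, $|\alpha+1|^2$ is a finite number and hence trivially $<2^\kappa$. If $\alpha$ is infinite, then $|\alpha+1|=|\alpha|$ and, by the usual cardinal arithmetic for infinite cardinals, $|\alpha+1|^2=|\alpha|^2=|\alpha|<2^\kappa$. In either case $|\alpha+1|^2<2^\kappa$, and combining this with Theorem~\ref{thm1} yields
\[
|\Pcut(\bar a)|\leq |\alpha+1|^2<2^\kappa,
\]
as desired.

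I do not expect any genuine obstacle here: the corollary is essentially a bookkeeping consequence of the refined bound $|\alpha+1|^2$, and the only point requiring a little care is translating ``$2^\kappa$ is not a fixed point of the $\aleph$-function'' into the clean inequality $\alpha<2^\kappa$ and then observing that $|\alpha|^2=|\alpha|$ collapses the square for infinite $\alpha$.
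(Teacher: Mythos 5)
Your proposal is correct and follows exactly the paper's argument: invoke the refined bound $|\Pcut(\bar a)|\leq|\alpha+1|^2$ from Theorem~\ref{thm1}, translate the non-fixed-point hypothesis into $\alpha<2^\kappa$, and conclude $|\alpha+1|^2<2^\kappa$. The paper's own proof is just a terser version of the same reasoning.
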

\begin{proof}
Let $\alpha$ be such that $2^\kappa=\aleph_\alpha$. Then $\alpha < 2^\kappa$, and hence by Theorem \ref{thm1},
\[
|\Pcut(\bar a)| \leq |\alpha+1|^2 < 2^\kappa,
\]
as required.
\end{proof}
\begin{remark}
\label{re1}
If in the definition of the set $\Pcut(\bar a)$, we allow $\lambda_2$ to be $1$, then the set $\Pcut(\bar a)$
can be large. Indeed if $D$ is an ultrafilter on $\kappa$ and for all $i, \theta_i < \mu_i$ is a regular cardinal, then
$$(\tcf(\prod\limits_{i<\kappa}\theta_i)/D, 1) \in \Pcut,$$
provided that $\tcf(\prod\limits_{i<\kappa}\theta_i)/D$ exists.
In particular if $\mu=\lim_D \langle \mu_i: i<\kappa \rangle=\aleph_\alpha,$
then $|\alpha| \leq |\Pcut(\bar a)|$.
\end{remark}
The next lemma shows that $\Pcut$ can take its maximal possible number of elements.
\begin{lemma}
\label{thm2}
Let $\bar a= \langle  \mu_i: i<\kappa   \rangle$ where $\kappa=\aleph_0$ and for $i<\omega$, $\mu_i=\omega$. Let $\theta > \aleph_0$ be an inaccessible cardinal and set $\bbP=\Add(\omega, \theta).$ Then $V[\bold G_\bbP] \models$``$|\Pcut(\bar a)|=\theta=2^{\aleph_0}$''.
\end{lemma}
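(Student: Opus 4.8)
The plan is to pin down the upper bound via Theorem \ref{thm1} and then match it by realising $\theta$ pairwise distinct cofinalities of cuts in ultrapowers of $(\omega,<)$.

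First I would record the cardinal arithmetic in the extension. Since $\bbP=\Add(\omega,\theta)$ is ccc and has a dense subset of size $\theta$, and since $\theta$ is inaccessible (so $\theta^{\aleph_0}=\theta$), in $V[\mathbf G]$ we have $2^{\aleph_0}=\theta$; in particular Theorem \ref{thm1} gives $|\Pcut(\bar a)|\le 2^{\aleph_0}=\theta$. Everything then reduces to the lower bound $|\Pcut(\bar a)|\ge\theta$. Here I would use inaccessibility a second time: the forcing is ccc, so all cardinals and cofinalities are preserved, and as $\theta$ is a regular limit cardinal the set $R=\{\lambda<\theta:\lambda\text{ regular},\ \lambda\ge\aleph_1\}$ still has cardinality $\theta$ in $V[\mathbf G]$.

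The strategy for the lower bound is to attach to each $\lambda\in R$ a cut whose cofinality pair is $(\aleph_0,\lambda)$, so that the resulting $\theta$ pairs are automatically pairwise distinct and all lie in $\Pcut(\bar a)$. The natural candidate is the \emph{standard cut} of an ultrapower $\prod_{i<\omega}\omega/D$: its lower part is the copy of $\omega$ given by the constant functions, which has cofinality $\aleph_0$, while its upper part is the set of nonstandard elements, whose coinitiality is by definition the lower cofinality ${\rm lcf}(D)$. Thus it suffices to produce, for each $\lambda\in R$, a nonprincipal ultrafilter $D_\lambda$ on $\omega$ with ${\rm lcf}(D_\lambda)=\lambda$; then $(\aleph_0,\lambda)\in\Pcut(\bar a)$ for every $\lambda\in R$, and we are done.

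The construction of $D_\lambda$ is the heart of the matter and the step I expect to be the main obstacle. I would build $D_\lambda$ directly as an ultrafilter by a recursion of length $\theta=2^{\aleph_0}$, deciding at stage $\gamma$ the membership of the $\gamma$-th subset of $\omega$, while simultaneously building a $<_{D_\lambda}$-decreasing sequence $\langle g_\xi:\xi<\lambda\rangle$ of nonstandard elements. At each step I would use a fresh block of mutually Cohen-generic reals to diagonalise: on the one hand to keep the generated filter proper, nonprincipal, and compatible with each $g_\xi$ lying strictly above the constants and strictly below the earlier $g_\eta$, and on the other hand to arrange, against the $\gamma$-th candidate function $h$, that some $g_\xi<_{D_\lambda}h$ whenever $h$ is nonstandard. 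That $\langle g_\xi\rangle$ is then coinitial in the nonstandard part gives ${\rm lcf}(D_\lambda)\le\lambda$, and regularity of $\lambda$ gives the reverse inequality for free: any subfamily of size $<\lambda$ has its indices bounded below $\lambda$, hence is bounded below by some $g_{\xi+1}$ and cannot be coinitial, so ${\rm lcf}(D_\lambda)=\lambda$ exactly. The delicate points, where the mutual independence and genericity of the Cohen reals are essential, are that the $\theta$ many potential ``filling'' functions $h$ can all be handled without ever forcing the coinitiality down to $\aleph_1$, and that the simultaneous demands of being an ultrafilter and of keeping the sequence strictly decreasing and unfilled remain jointly consistent throughout the recursion. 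Granting this, the $\theta$ distinct pairs $(\aleph_0,\lambda)$ witness $|\Pcut(\bar a)|\ge\theta$, which together with the upper bound yields $|\Pcut(\bar a)|=\theta=2^{\aleph_0}$ in $V[\mathbf G]$.
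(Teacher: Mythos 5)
Your overall strategy is the same as the paper's: the upper bound comes from Theorem \ref{thm1} together with $2^{\aleph_0}=\theta$ in the extension, and the lower bound comes from realizing $\theta$ pairwise distinct cofinality pairs of cuts in ultrapowers of $(\omega,<)$. The paper obtains the lower bound in one line by citing Canjar \cite{canjar}, who shows that in this model every pair of regular cardinals $\le 2^{\aleph_0}$ is the cofinality of a cut of some $\prod_{i<\omega}\omega/D$. Your reduction is sound as far as it goes: the standard cut of $\prod_{i<\omega}\omega/D$ does have cofinality pair $(\aleph_0,{\rm lcf}(D))$, the pairs $(\aleph_0,\lambda)$ for $\lambda\in R$ are pairwise distinct, $|R|=\theta$ by inaccessibility and ccc-ness, and your argument that a strictly $<_D$-decreasing coinitial sequence of length a regular $\lambda$ forces the coinitiality to be exactly $\lambda$ is correct.

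The gap is exactly where you flagged it, and it is not a removable formality: producing, for each regular uncountable $\lambda<\theta$, an ultrafilter $D_\lambda$ on $\omega$ with ${\rm lcf}(D_\lambda)=\lambda$ \emph{is} (a special case of) Canjar's theorem, and your recursion sketch does not resolve its central difficulty. Since $\lambda<\theta=2^{\aleph_0}$ and $\theta$ is regular, the $\lambda$ many functions $g_\xi$ are all committed by some stage $\delta<\theta$ of a $\theta$-length recursion; at each of the remaining $\theta$ many stages you must arrange, for the candidate function $h_\gamma$, that either $h_\gamma$ is bounded by a constant on a set forced into $D_\lambda$ or some \emph{already fixed} $g_\xi$ satisfies $g_\xi\le_{D_\lambda} h_\gamma$ --- you cannot keep inserting new elements below all the $g_\xi$, as that would destroy their coinitiality. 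Showing that these $\theta$ many requirements are simultaneously satisfiable with only $\lambda$ many $g_\xi$'s is the substantive content; it needs the Cohen genericity in an essential way (e.g.\ a bookkeeping argument placing the stage-$\gamma$ filter inside an intermediate model $V[\bold G\restriction a]$ with $|a|<\theta$, over which the next coordinates are generic). As written, your proof therefore has a hole at its key step; the efficient repair is to quote Canjar's result, as the paper does, rather than reprove it.
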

\begin{proof}
By \cite{canjar},
\[
V[\bold G_\bbP] \models\text{``~}\Pcut(\bar a) \supseteq \{(\lambda_1, \lambda_2): \lambda_1, \lambda_2 \leq \kappa \text{~are uncountable regular cardinals}    \}\text{''},
\]
from which the result follows.
\end{proof}

\end{document}